\numberwithin{equation}{section}
\renewcommand{\section}{\@startsection{section}{1}{0pt}{20pt}{6pt}{\large\bf}}
\renewcommand{\@seccntformat}[1]{\csname the#1\endcsname.\ }
\def\footnoterule{\kern -3pt \hrule width 2.7 true cm \kern 2.6pt}
\def\ni{\noindent}
\renewcommand{\epsilon}{\varepsilon}
\newcommand{\spint}[4]{\int_{#1}^{#2} \! #3 \, \mathrm{d} #4}
\newcommand{\spsum}[1]{\sum_{\mathclap{#1}}}
\newcommand{\rplusr}[1][]{\mathbb{R}_+ \times \mathbb{R}^{#1}}
\newcommand{\rplusrtor}[1][]{\mathbb{R}_+ \times \mathbb{R}^{#1} \rightarrow \mathbb{R}}
\newcommand{\indic}[1]{\, \mathbbm{1}_{\{ #1 \}}}
\newcommand{\lt}[2]{\ell^{#1}_{#2}}
\newtheorem{theorem}{Theorem}[section]
\theoremstyle{remark}
\newtheorem{remark}[theorem]{Remark}
\theoremstyle{definition}
\theoremstyle{plain}
\begin{document}

\title{\huge \textbf{Change of Variables with Local Time on Surfaces for Jump Processes}}
\author{Daniel Wilson}
\date{}
\maketitle






{\par \leftskip=2.1cm \rightskip=2.1cm \footnotesize
\noindent The `local time on curves' formula of Peskir provides a stochastic change of variables formula for a function whose derivatives may be discontinuous over a time-dependent curve, a setting which occurs often in applications in optimal control and beyond. This formula was further extended to higher dimensions and to include processes with jumps under conditions which may be hard to verify in practice. We build upon the work of Du Toit in weakening the required conditions by allowing semimartingales with jumps. In addition, under vanishing of the sectional first derivative (the so-called `smooth fit' condition), we show that the classical It\^o formula still holds under general conditions.

\par}


\footnote{{\it Mathematics Subject Classification 2010.}
Primary 60H05, 60H30. Secondary 60G40, 60J60.}

\footnote{{\it Key words and phrases:} Change of variables, local time, surfaces, jump process}


\vspace{-20pt}

\section{Introduction} \label{intro}
The prototype non-smooth function which necessitates a generalisation of the classical It\^o formula for $C^2$ functions is the absolute value function, which is linear except at the origin, where it has an irregularity despite remaining continuous. Namely, for a continuous semimartingale $X$, we have Tanaka's formula
\begin{equation}
|X_t| = |X_0| + \spint{0}{t}{\text{sgn}(X_s)}{X_s} + \lt{0}{t},
\end{equation}
where $\lt{0}{t}$ is the local time of $X$. Allowing this irregular point to move along a continuous bounded variation curve $b:\mathbb{R}_+ \to \mathbb{R}$, our problem becomes to find an expansion for the process
\begin{equation}
\left| X_t - b(t) \right|.
\end{equation}
Geometrically the function $F(x,t) = \left| x - b(t) \right|$ is a glueing of two linear functions along $b$, and an expansion is easily obtained by noting that $X-b$ is a semimartingale and so Tanaka's formula applies. More generally, we may try to glue together two smooth functions $F_1$ and $F_2$ along $b$ yielding
\begin{equation} \label{fdef}
F(t,x) = \left\{ \begin{array}{ll}
F_1(t,x) & x \leq b(t), \\
F_2(t,x) & x > b(t),
\end{array} \right.
\end{equation} 
where $F_1 = F_2$ when $x = b(t)$. Finding an expansion for $F(t,X_t)$ falls outside the scope of classical results, but is crucial in applications appearing in mathematical finance and optimal stopping. This motivated the development of the local time on curves formula, a kind of time-dependent It\^o-Tanaka formula due to Peskir \cite{peskir05}, which takes the form
\begin{equation} \label{ltc-formula}
\begin{split}
F(t,X_t) = \, F(0,X_0)  \,  &+  \spint{0}{t}{ F_t(s,X_s -)  }{s} +  \spint{0}{t}{ F_x(s,X_s -) }{X_s} \\&+  \frac{1}{2}\spint{0}{t}{  F_{xx}(s,X_s) \indic{X_s \, \neq \, b(s)} }{\! \left< X,X \right>_s} \\ 
&+  \spint{0}{t}{\,\frac{1}{2} \, \big( F_x(s,X_s \, +) - F_x(s,X_s \, -) \big) \indic{X_s \, = \,\, b(s)}  }{_s \lt{b}{s}(X)},
\end{split}
\end{equation}
where the final integral is with respect to the time variable of the local time $\lt{b}{s}(X)$, defined by $\lt{0}{s}(X - b)$. 

The key motivation for the local time on curves formula appeared in \cite{peskiramerican05}, where it was used to characterise the boundary of the optimal stopping problem associated to the American put option, solving a long-standing open problem. In practice, the expressions for $F_1$, $F_2$ and $b$ are implicit, and one must prove both that $F$ defined by (\ref{fdef}) is continuous and extends to a $C^{1,2}$ function from both sides of the curve $b$, and that $b$ is of bounded variation. Despite being significantly weaker than the global $C^{1,2}$ condition of It\^o's formula, this is difficult or impossible in general. However, the formula itself contains only one-dimensional limits parallel to the space axis of the form
\begin{equation}
F_x(s,b(s) \pm),
\end{equation}
which exist under far weaker regularity conditions. We will follow this idea to obtain weaker conditions under which this formula holds, even for the higher-dimensional analogues over hyper-surfaces and with multidimensional jump processes. Further, if the expression
\begin{equation}
F_x(s,b(s) \, +) - F_x(s,b(s) \, -)
\end{equation}
vanishes, we say that the function $F$ obeys the smooth fit condition over $b$. Once again, this is far weaker than $F$ being $C^{1,2}$ or even $C^1$ at $b$. We will provide conditions in this case which are easily verified, and the resulting formula takes form of the classical It\^o formula, where $b$ may even be of unbounded variation.

This formula has since been applied in many other problems in mathematical finance and optimal stopping; relevant recent examples which required weaker conditions include \cite{gao17,qiu14}, and very recently \cite{detemple18}. Further, in \cite{johnson17}, the authors go to some effort to prove that the (random) curve $b$ is of bounded variation, a condition which we may remove entirely thanks to Theorem \ref{jweaksmooth5} below. One of the first works in the case of jump-diffusion processes \cite{bu18} demonstrates the need for a change-of-variables formula like (\ref{ltc-formula}) for more general processes, especially given contemporary interest in unbounded variation L\'evy models in finance. A formula of the type (\ref{ltc-formula}) was also recently used by \'Etor\'e and Martinez in \cite{etore18} to prove existence and uniqueness for a type of stochastic differential equation involving the local time.

Firstly in Section \ref{previousresults} we will introduce the local time on curves formula, its extension to surfaces, and discuss previous developments. In Section \ref{newprogress} we present two special cases, Theorems \ref{jweaksmooth4} and \ref{jweaksmooth5}, which provide clear and applicable results for common settings in applications. It is hoped this will equip the reader to approach the main Theorems \ref{firsttheorem} and \ref{extendeditoform}, in Sections \ref{ltcsection} and \ref{itoextension} respectively. The main predecessor of this work is the thesis of Du Toit \cite{dutoit}; Theorem \ref{firsttheorem} is a version of this Du Toit's result extended to include jump processes under certain conditions. Theorem \ref{extendeditoform} deals with the smooth-fit case, described below around equation (\ref{smoothfit}), which allows us to remove some technical assumptions when the local time term is no longer present. In this case the result is a strict generalisation of It\^o's formula. Also present are some Remarks \ref{firstremark} -- \ref{jumpremark} and \ref{removelimsrmk}, which further detail how technical barriers may be removed to help apply the formulae.

\section{Previous results} \label{previousresults}

We work throughout with semimartingales whose local time admits a right-continuous modification in the space variable. In this case, the right local time at time $ t \geq 0 $ and level $a \in \mathbb{R}$ of an arbitrary semimartingale $ Y $, denoted $\lt{a}{t}(Y)$, may be defined pathwise almost surely by the limit
\begin{equation} \lt{a}{t}(Y) = \lim_{\epsilon \to 0} \, \frac{1}{\epsilon} \spint{0}{t}{\mathbbm{1}_{ \{ a \, \leq \, Y_s  <  \,a + \epsilon \}  }}{\! \left[ Y,Y \right]^c_s}. \end{equation}
In general, the superscript $c$ denotes the continuous part of a measure. We also mention the symmetric local time, defined similarly by the pathwise almost-sure limit  
\begin{equation} L^a_t(Y) = \lim_{\epsilon \to 0} \, \frac{1}{2\epsilon} \spint{0}{t}{\mathbbm{1}_{ \{ a - \epsilon \,< \, Y_s <\, a + \epsilon \}  }}{\! \left[ Y,Y \right]^c_s}. \end{equation}
There exist discontinuous semimartingales for which the local time does not admit a right-continuous modification in space. In this case, the local time may be defined by the Tanaka formula. For a general overview of local time in the discontinuous case, see \cite{protter04}.

The local time on curves formula was first derived by Peskir in \cite{peskir05}, and later extended to surfaces in \cite{peskir07}. Let us set the scene in the time-space case. First, let $b: \mathbb{R}_+  \to \mathbb{R}$ be a continuous function of bounded variation. Define two sets:
\begin{gather}
 C = \{ (t,x) \in \rplusr \; | \; x < b(t) \}, \\
 D = \{ (t,x) \in \rplusr \; | \; x > b(t) \}.
\end{gather}
The graph of $ b$ is a non-smooth but continuous curve. The sets $C$ and $D$ are the (strict) hypograph and epigraph of this curve respectively. A function $F: \rplusrtor$ is said to satisfy the strong smoothness conditions if it is globally continuous, and:
\begin{gather}
F \text{ is } C^{1,2} \text{ on } \bar{C}, \\
F \text{ is } C^{1,2} \text{ on } \bar{D}.
\end{gather}
This means that the restriction of $F$ to $C$ has a $C^{1,2}$ global extension, and the same for the restriction of $F$ to $D$. Under the strong smoothness conditions for continuous $X$, by \cite{peskir05} we have the local time on curves formula (\ref{ltc-formula}). This is a very strong condition, as even uniform convergence of all derivatives from each side of the curve does not guarantee that the function can be extended to a $C^{1,2}$ function everywhere without further assumptions on the curve $b$. In general terms the geometry of the time-space domain allows us to approach the curve from many directions, forcing us to adopt strong assumptions. In preparation for weakening these assumptions, we say that $F$ obeys the weak smoothness conditions if it is globally continuous and:
\begin{gather}
F \text{ is } C^{1,2} \text{ on } C, \label{wsc1}\\
F \text{ is } C^{1,2} \text{ on } D \label{wsc2}.
\end{gather}

The first formula under weaker conditions was given by Peskir \cite{peskir05}. Assume that $X$ solves the SDE
\begin{equation} \label{xitodiff}
dX_t = \mu(t,X_t) \, dt + \sigma(t,X_t) \, dB_t,
\end{equation}
in It{\^o}'s sense, for $\mu_X$ and $\sigma>0$ continuous and locally bounded, where $B = (B_t)_{t \geq 0}$ is standard Brownian motion. Let $F$ obey the weak smoothness conditions (\ref{wsc1}) and (\ref{wsc2}) above. Then
\begin{equation} \label{ito-diff-form}
\begin{split}
\hspace{-15pt}F(t,X_t) = F(0,X_0) \, &+ \spint{0}{t}{\big( F_t + \mu F_x  + (\sigma^2 / 2) F_{xx}\big)(s,X_s) \, \mathbbm{1}_{\{ X_s \,\neq \,\, b(s)\}}}{s} \\[1ex]
&+ \spint{0}{t}{\big(\sigma F_x\big)(s,X_s) \, \mathbbm{1}_{ \{ X_s \,\neq \,\, b(s) \} }}{B_s} \\[1ex]
&+ \frac{1}{2} \spint{0}{t}{\big( F_x(s,X_s +) - F_x(s,X_s -) \big) \, \mathbbm{1}_{\{ X_s \,= \,\, b(s) \}}}{_s L^b_s(X)},
\end{split}
\end{equation}
if the following three conditions are satisfied:
\begin{gather}
F_t + \mu F_x + (\sigma^2 / 2) F_{xx} \text{ is locally bounded on } C \cup D; \label{con3} \\[0.2ex]
F_x(\cdot , b(\cdot) \pm \epsilon) \to F_x(\cdot, b(\cdot) \pm) \text{ uniformly on } [0,t] \text{ as } \epsilon \downarrow 0 \label{ctscon}; \\
\sup_{0\,< \, \epsilon  \,< \,\delta} V \big( F(\cdot , b(\cdot) \pm \epsilon) \big)(t) < \infty \text{ for some } \delta>0. \label{weirdvarcond}
\end{gather}
Here, $V(G)(t)$ is the variation of $G$ on the interval $[0,t]$. Note that the expression in (\ref{con3}) is the infinitesimal generator of $X$ applied to $F$. This is a natural object, and usually appears in place of the individual derivatives of $F$. Its local boundedness ensures the existence of the time integral in (\ref{ito-diff-form}) and weakens the previous conditions by allowing us to cancel oscillation or divergence of individual derivatives. Also, note that condition (\ref{ctscon}) ensures that the limiting jump in $F_x$ is continuous.

Du Toit \cite{dutoit} has removed the requirement (\ref{weirdvarcond}) by using a different method of proof, and further weakened condition (\ref{ctscon}) to continuity of the map $t \mapsto F_x (t,b(t) +) - F_x(t,b(t) -)$. In fact, he shows this in the more general setting when $b$ also depends on a continuous bounded variation process $A$. 

Let us generalise the setting in the introduction by moving to the discontinuous case, and introducing a process of bounded variation alongside the full semimartingale $X$. Let $(A_t)_{t \geq 0}$ be an adapted process of locally-bounded variation, and let $b:\rplusrtor$ be continuous, such that $(b(t,A_t))_{t \geq 0}$ is a semimartingale. Note now that $A$ and $X$ may have jumps. Define the sets:
\begin{gather} \label{graphsets}
 C = \{ (t,a,x) \in \rplusr[2] \; | \; x < b(t,a) \}, \\
 D = \{ (t,a,x) \in \rplusr[2] \; | \; x > b(t,a) \} \label{graphsets2}.
\end{gather}
We say $F$ satisfies the strong smoothness conditions if it is globally continuous, and:
\begin{gather}
F \text{ is } C^{1,1,2} \text{ on } \bar{C}, \\
F \text{ is } C^{1,1,2} \text{ on } \bar{D}.
\end{gather}
If $F$ is globally continuous and instead we have $C^{1,1,2}$ regularity only on the open sets $C$ and $D$ then we say $F$ obeys the weak smoothness conditions.

By \cite{peskir07}, in the strong smoothness setting we retain the obvious analogue of formula (\ref{ltc-formula}) above, which reads
\begin{equation}
\begin{split} \label{strongsmoothnessltc}
\hspace{-15pt}F(t,&A_t,X_t ) = F(0,A_0,X_0)  \, + \spint{0}{t}{ \,\frac{1}{2} \, \big( F_t(s-,A_{s-},X_{s-}+) + F_t(s-,A_{s-},X_{s-}-) \big) }{s}  \\
 &+ \spint{0}{t}{ \,\frac{1}{2}\, \big( F_a(s-,A_{s-},X_{s-}+) + F_a(s-,A_{s-},X_{s-}-) \big) }{A_s}  \\
&+ \spint{0}{t}{ \,\frac{1}{2}\, \big( F_x(s-,A_{s-},X_{s-}+) + F_x(s-,A_{s-},X_{s-}-) \big) }{X_s}  \\
&+ \frac{1}{2} \spint{0}{t}{ F_{xx}(s-,A_{s-},X_{s-}) \indic{X_{s-} \, \neq \, b_{s-}}  }{\! \left[X,X \right]^c_s}  \\
&+ \spint{0}{t}{\, \frac{1}{2}\, \big( F_x(s-,A_{s-},X_{s-}+) + F_x(s-,A_{s-},X_{s-}-) \big) \indic{X_{s-} \, = \, b_{s-} \, , \, X_s \, = \, b_s} }{_s L^b_s(X)}  \\
&+ \sum_{0 < s \leq t}  \Big(  F(s,A_s,X_s) - F(s-,A_{s-},X_{s-})  \\[-1.2ex]
& \hspace{80pt} - \frac{1}{2} \, \big( F_a(s-,A_{s-},X_{s-}+) + F_a(s-,A_{s-},X_{s-}-) \big) \, \Delta A_s  \\[0.6ex]
& \hspace{80pt} - \frac{1}{2} \, \big( F_x(s-,A_{s-},X_{s-}+) + F_x(s-,A_{s-},X_{s-}-) \big) \, \Delta X_s \, \Big),
\end{split}
\end{equation}
where the final integral is with respect to the time variable of $L^b_s(X)$, the symmetric local time of the semimartingale $(X-b)$ at $0$.

In certain problems in optimal stopping, it is possible to directly verify the `smooth fit' condition, meaning that
\begin{equation} \label{smoothfit}
(t,a) \mapsto  F_x (t,a,b(t,a) +) -  F_x (t,a,b(t,a) -) 
\end{equation}
is identically zero. This indicates that we no longer require the local time component above. It should be noted that the function $F$ may still fail to be $C^{1,1,2}$ at the surface $b$, so the conditions of the classical It\^o formula may not hold.

The results of Du Toit \cite{dutoit} form the basis for the results and proofs in this paper, which are themselves a clever combination of the convolution and linear-interpolation methods of \cite{peskir05}. The extensions in this paper deal with jump processes, provide observations on removing some small technical barriers (see Remarks \ref{firstremark}--\ref{lastremark}), and obtain weaker conditions in the smooth-fit case (Theorem \ref{extendeditoform}). The reader should note that the thesis \cite{dutoit} contains results in the setting of intersecting curves, which this paper does not treat.

\section{Results for diffusion and jump processes} \label{newprogress}

The main results of this paper, Theorems \ref{firsttheorem} and \ref{extendeditoform}, are general but technical. Here we treat some special cases which indicate their utility.

When $X$ satisfies (\ref{xitodiff}), then formula (\ref{ito-diff-form}) holds under conditions (\ref{con3}), (\ref{ctscon}) and (\ref{weirdvarcond}). We now demonstrate an example which shows that the analogous result holds when $A$ and $X$ are discontinuous, under certain conditions on the jumps.

\begin{theorem} \label{jweaksmooth4}
Assume $X$ solves the following SDE
\begin{equation}
\mathrm{d} X_t = \mu_X(t-, A_{t-},X_{t-}) \, \mathrm{d}t + \sigma(t-, A_{t-},X_{t-}) \, \mathrm{d} B_t + \lambda_X(t-, A_{t-},X_{t-}) \, \mathrm{d} Y_t,
\end{equation}
where $Y_t$ is a pure-jump L\'evy process with bounded variation. Assume $A$ satisfies the SDE
\begin{equation}
\mathrm{d} A_t = \mu_A(t-, A_{t-},X_{t-}) \, \mathrm{d}t + \lambda_A(t-, A_{t-},X_{t-}) \, \mathrm{d} Y_t.
\end{equation}
Let $b: \rplusrtor$ and $F: \rplusrtor[2]$ be continuous, with $b$ Lipschitz. Further assume $F=F(t,a,x)$ obeys the weak smoothness conditions (\ref{wsc1}), (\ref{wsc2}).
If the following conditions are satisfied:
\begin{gather}
\begin{split}
 &\text {The function }  (F_t + \mu_X  F_x + \mu_A F_a + (\sigma^2/2) F_{xx})(t,a,x) \\ & \text{is locally bounded;} \label{con8}
\end{split}\\[1ex]
\begin{split}
&\text {The map }  t \mapsto F_x(t,A_t,b_t+) - F_x(t,A_t,b_t-) \text{ is almost surely continuous,} \label{con9}
\end{split} \\[1ex]
\text{We have } \mathbb{P} \left[ X_{s-} = b_{s-} \right] = 0 \text{ for all } 0 < s \leq t.
\end{gather}
then the following change of variable formula holds,
\begin{equation} \label{bigeqn}
\begin{split}
\hspace{-15pt} F(t,A_t,X_t) \; = \;\; & F(0,A_0,X_0) \, + \spint{0}{t}{ \left(\sigma F_x\right) (s-,A_{s-},X_{s-}) \indic{X_{s-} \neq b_{s-}} }{B_s}  \\ 
&+ \spint{0}{t}{ (F_t + \mu_X  F_x + \mu_A F_a + (\sigma^2/2) F_{xx})(s-,A_{s-},X_{s-}) \indic{X_{s-} \neq b_{s-}} }{s} \\
&+ \frac{1}{2} \spint{0}{t}{ \left( F_x(s-,A_{s-},b_{s-}+) - F_x(s-,A_{s-},b_{s-}-) \right)  \indic{X_{s-} = \, b_{s-}}}{_s \lt{b}{s}(X)} \\[0.8ex]
&+ \sum_{0 < s  \leq t} F(s,A_s,X_{s}) - F(s-,A_{s-},X_{s-}),
\end{split}
\end{equation}
for all $ t \geq 0 $, where the final integral is with respect to the time variable of $\lt{b}{s}(X)$, the right-local time of the semimartingale $(X-b)$ at $0$.
\end{theorem}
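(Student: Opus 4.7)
The plan is to derive the statement as a special case of the main Theorem \ref{firsttheorem}, by verifying that the SDE structure for $X$ and $A$ delivers its hypotheses in a clean form. Since $Y$ is a pure-jump L\'evy process of bounded variation, both $X$ and $A$ are semimartingales whose jumps occur only at jump times of $Y$, and $A$ is adapted with locally bounded variation. The Lipschitz assumption on $b$ together with the bounded variation of $A$ ensures that $t \mapsto b(t, A_t)$ is of locally bounded variation as well, so that $X - b(\cdot, A_\cdot)$ is a semimartingale, its right local time $\lt{b}{s}(X)$ is well defined, and the decomposition (\ref{graphsets})--(\ref{graphsets2}) is meaningful in this setting.

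Next I match the terms produced by Theorem \ref{firsttheorem} with the explicit expressions in (\ref{bigeqn}). On the open set $C \cup D$ the weak smoothness assumption gives $F \in C^{1,1,2}$, so a formal use of It\^o's formula for jump semimartingales writes the continuous martingale part of $F(t,A_t,X_t)$ as the Brownian integral of $\sigma F_x$ and the continuous finite-variation part as the integral of the full generator $F_t + \mu_X F_x + \mu_A F_a + (\sigma^2/2)F_{xx}$. Condition (\ref{con8}) is exactly the local boundedness required for these continuous integrals to exist on $C \cup D$, and condition (\ref{con9}) is the continuity hypothesis on the jump of $F_x$ across the surface restricted to the random curve $a = A_t$, i.e.\ the analogue of (\ref{ctscon}). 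The local-time term is delivered directly by Theorem \ref{firsttheorem}, and the final jump sum is the countable telescoping contribution $\sum_{0 < s \leq t}(F(s,A_s,X_s) - F(s-,A_{s-},X_{s-}))$ over the jump times of $Y$.

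The non-hitting assumption $\mathbb{P}[X_{s-} = b_{s-}] = 0$ is what makes this decomposition clean: it ensures that almost surely no jump begins on the surface, so the jump sum reduces to a plain telescoping sum without any boundary compensator, and the indicators $\indic{X_{s-} \neq b_{s-}}$ inside the Brownian and time integrals in (\ref{bigeqn}) do not alter their values, since the set they remove is Lebesgue- and $\left\langle B,B\right\rangle$-null. I expect the main obstacle to be precisely this reconciliation between jumps and the local time: showing that no mass of $\lt{b}{s}(X)$ is charged at jump times, so that the indicator $\indic{X_{s-} = b_{s-}}$ in the local time integrand is consistent with the local time of a semimartingale living on the continuous part of the quadratic variation, and confirming that the Lipschitz regularity of $b$ together with the SDE structure for $A$ allows the composed process $b(t, A_t)$ to interact with the jumps of $X$ exactly as Theorem \ref{firsttheorem} requires.
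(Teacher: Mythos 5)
Your high-level approach---deducing Theorem \ref{jweaksmooth4} as a corollary of Theorem \ref{firsttheorem}, with bounded variation of jumps coming from the bounded-variation L\'evy driver $Y$, and bounded variation of $t \mapsto b(t,A_t)$ coming from the Lipschitz assumption on $b$ and Josephy's result---is the same route the paper takes, and those verifications are correct. However, you never actually verify the central hypothesis (\ref{assumption}) of Theorem \ref{firsttheorem}, which requires exhibiting a signed measure $\lambda$ and a locally bounded function $H$ with existing left limits satisfying the integral identity (\ref{assumption}) for all small $\epsilon$ and $c$. This is not a side condition but the engine of the reduction, and without it you cannot invoke Theorem \ref{firsttheorem} at all. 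The correct choice is $\lambda = $ Lebesgue measure and $H = F_t + \mu_X F_x + \mu_A F_a + (\sigma^2/2)F_{xx}$, using that under the SDE $\mathrm{d}A^c_s = \mu_A\,\mathrm{d}s$, $\mathrm{d}K^c_s = \mu_X\,\mathrm{d}s$, and $\mathrm{d}[X,X]^c_s = \sigma^2\,\mathrm{d}s$; condition (\ref{con8}) supplies local boundedness of $H$, but the existence of $H(t,a,x-)$ fails precisely on the surface $x = b(t,a)$, so you must invoke Remark \ref{limitremark} together with the hypothesis $\mathbb{P}[X_{s-} = b_{s-}] = 0$ (equivalently, the occupation-times argument) to see that the offending set is $\lambda$-null, after which $H$ has left limits where it matters. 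This chain of observations is essential and absent from your argument.

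Two smaller imprecisions: your phrase ``a formal use of It\^o's formula for jump semimartingales writes the continuous martingale part\dots'' is circular, since the theorem being proved is exactly the It\^o-type expansion of $F(t,A_t,X_t)$ and cannot presuppose one; the correct statement is that, in the canonical decomposition $X = X_0 + K + M$ used by Theorem \ref{firsttheorem}, the local martingale $M$ is the continuous process $\int \sigma\,\mathrm{d}B$ (the jump part of $X$ being absorbed into $K$ because $\lambda_X\,\mathrm{d}Y$ has bounded variation), so $\Delta M_s \equiv 0$ and the jump-compensation term $F_x(s-,A_{s-},X_{s-}-)\Delta M_s$ in (\ref{ltcformula}) vanishes identically, which is how the plain telescoping sum in (\ref{bigeqn}) arises. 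Your attribution of the clean jump sum to the non-hitting assumption is therefore misplaced: $\mathbb{P}[X_{s-} = b_{s-}] = 0$ is used, as the paper notes, only to show that the indicators $\indic{X_{s-} \neq b_{s-}}$ can be inserted into the Lebesgue and Brownian integrals without changing their values, and to supply the $\lambda$-null set needed for the $H$-left-limit issue above.
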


The requirement that $Y$ has bounded variation ensures that $X$ has bounded variation of jumps, meaning that
\begin{equation}
\sum_{0 < s \leq t} \left| \Delta X_s \right| < \infty,
\end{equation}
for all $t \in \mathbb{R}_+$, which implies that the local time is right-continuous in space (see \cite{protter04}). Further, if $b$ is a Lipschitz function, then the process $b_t = b(t,A_t)$ is of locally-bounded variation whenever $A$ is (see \cite{josephy81}). Finally, the stipulation that $\mathbb{P}\left[ X_{s-} = b_{s-} \right] = 0$ for each $0 < s \leq t$ implies that the presence of the indicator functions in (\ref{bigeqn}) do not change the value of the integrals in which they appear. Their introduction allows us to avoid discussion of the limiting values of the derivatives of $F$ as we approach the surface $b$, which can be hard to check in practice.

In the smooth fit case, we no longer require the local time component above. It should be noted that the function $F$ may still fail to be $C^{1,1,2}$ at the surface $b$, which is also much more difficult to check than the sectional continuity of the derivative. In this case, we may allow an arbitrary surface $b$ and remove conditions on the jumps of $X$. 
 
\begin{theorem} \label{jweaksmooth5}
Assume $X$ satisfies the following SDE
\begin{equation}
\mathrm{d} X_t = \mu_X(t-, A_{t-},X_{t-}) \, \mathrm{d}t + \sigma(t-, A_{t-},X_{t-}) \, \mathrm{d} B_t + \lambda_X(t-, A_{t-},X_{t-}) \, \mathrm{d} Y_t,
\end{equation}
where $Y_t$ is a pure-jump L\'evy process. Let $Z$ be a pure-jump L\'evy process of bounded variation, and assume $A$ satisfies the SDE
\begin{equation}
\mathrm{d} A_t = \mu_A(t-, A_{t-},X_{t-}) \, \mathrm{d}t + \lambda_A(t-, A_{t-},X_{t-}) \, \mathrm{d} Z_t.
\end{equation}
Let $b: \rplusrtor$ and $F: \rplusrtor[2]$ be continuous, such that $F=F(t,a,x)$ obeys the weak smoothness conditions (\ref{wsc1}), (\ref{wsc2}). Assume that:
\begin{gather}
\begin{split}
 &\text {The function }  (F_t + \mu_X  F_x + \mu_A F_a + (\sigma^2/2) F_{xx})(t,a,x) \\ & \text{is locally bounded;} \label{con10}
\end{split}\\[1ex] 
\begin{split}
&\text {The map }  t \mapsto F_x(t,A_t,b_t+) - F_x(t,A_t,b_t-) \text{ is almost surely identically zero.} \label{con11}
\end{split} \\[1ex]
\text{We have } \mathbb{P} \left[ X_{s-} = b_{s-} \right] = 0 \text{ for all } 0 < s \leq t.
\end{gather}
Then the following change of variable formula holds,
\begin{equation}
\begin{split}
\hspace{-15pt} F(t,A_t,X_t) = &F(0,A_0,X_0) \, + \spint{0}{t}{ \left(\sigma F_x\right) (s-,A_{s-},X_{s-}) \indic{X_{s-} \neq b_{s-}} }{B_s}  \\ 
&+ \spint{0}{t}{\left( \lambda_X F_x \right)(s-, A_{s-},X_{s-}-) }{Y_s} \\
&+ \spint{0}{t}{ (F_t + \mu_X  F_x + \mu_A F_a + (\sigma^2/2) F_{xx})(s-,A_{s-},X_{s-}) \indic{X_{s-} \neq b_{s-}} }{s} \\[0.8ex]
&+ \sum_{0 < s \leq t} \Big( F(s,A_s,X_{s}) - F(s-,A_{s-},X_{s-})  - F_x(s-,A_{s-},X_{s-}-) \Delta X_s \Big),
\end{split}
\end{equation}
for all $ t \geq 0 $.
\end{theorem}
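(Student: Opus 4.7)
\emph{Plan and localization.} The strategy is a direct mollification argument in the style of Peskir \cite{peskir05,peskir07}, with the smooth-fit hypothesis (\ref{con11}) eliminating the would-be local time contribution from the general change-of-variables formula. By standard localization (stopping when $X$, $A$, and the variation of $Z$ exit a large ball) we may assume that the derivatives of $F$ appearing on $C \cup D$, the SDE coefficients $\mu_X,\mu_A,\sigma,\lambda_X,\lambda_A$, and $F$ itself are all uniformly bounded and $F$ has compact support.

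\emph{Mollification and It\^o step.} Let $\rho_n$ be a standard approximate identity on $\mathbb{R}^3$ of width $1/n$, and set $F^n = F * \rho_n$. Then $F^n$ is $C^\infty$, $F^n \to F$ locally uniformly, and $F^n_t, F^n_a, F^n_x, F^n_{xx}$ converge pointwise to the corresponding derivatives of $F$ on the open sets $C$ and $D$. Applying the classical It\^o formula for semimartingales with jumps to $F^n(t,A_t,X_t)$, and using that $Z$ is a pure-jump process of bounded variation so that $\int \lambda_A F^n_a \,\mathrm{d}Z_s = \sum \lambda_A F^n_a \Delta Z_s = \sum F^n_a \Delta A_s$, we may regroup the expansion so that $\mu_A F^n_a\,\mathrm{d}s$ enters the drift while $\lambda_A F^n_a \Delta Z_s$ is absorbed into the jump-compensation sum, yielding precisely the right-hand side of the claimed identity but with $F$ replaced by $F^n$ (and with no indicators or sectional limits, since $F^n$ is smooth).

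\emph{Passing to the limit.} Let $n \to \infty$. By dominated convergence, and using $\mathbb{P}[X_{s-}=b_{s-}]=0$ to handle the ambiguity in sectional limits at points of $b$, the drift, Brownian, $Y$-integral and jump-compensation terms converge to the corresponding terms in the claimed formula (the indicators $\mathbbm{1}_{X_{s-}\neq b_{s-}}$ appearing naturally to disambiguate, and being harmless under the probability hypothesis). The critical term is $\tfrac12 \int F^n_{xx}(s-,A_{s-},X_{s-})\,\sigma^2(s-,A_{s-},X_{s-})\,\mathrm{d}s$, the only place $F^n_{xx}$ enters: by an occupation-times identity applied to the continuous part of $[X,X]$, this integral splits in the limit into the bulk contribution $\tfrac12 \int F_{xx}\sigma^2\,\mathbbm{1}_{X_{s-}\neq b_{s-}}\,\mathrm{d}s$ plus a boundary contribution of the form $\tfrac12\int \big(F_x(s-,A_{s-},b_{s-}+) - F_x(s-,A_{s-},b_{s-}-)\big)\,\mathrm{d}_s \lt{b}{s}(X)$. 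Under hypothesis (\ref{con11}) the integrand in the boundary term is identically zero, and the formula follows.

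\emph{Main obstacle.} The principal difficulty is the occupation-time step that identifies the limit of the mollified $F^n_{xx}\,\sigma^2$ integral with the would-be local time contribution on $b$, carried out in a way that is insensitive to the jumps of $X$. This is possible because the jump component of $[X,X]^c$ vanishes by definition, while the jumps of $X$ enter only through the compensated sum $\sum(\Delta F - F_x\Delta X)$ controlled by summability against bounded $F_x$. Smooth fit then eliminates the boundary term outright — which is exactly why, unlike in Theorem \ref{jweaksmooth4}, no bounded-variation hypothesis on $Y$ is needed: that assumption was there to guarantee right-continuity of $\lt{b}{s}(X)$ in space, but here we never actually touch the local time.
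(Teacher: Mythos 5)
Your approach---a direct three-dimensional mollification $F^n = F * \rho_n$ followed by the classical It\^o formula---is genuinely different from the paper's, which derives Theorem \ref{jweaksmooth5} by specialising the general Theorem \ref{extendeditoform}; the proof of the latter uses one-sided convolutions in the $x$-variable only, truncated at an $\epsilon$-gap around a Lipschitz Moreau approximation $\tilde b^m$ of the surface. Unfortunately the direct route, as you present it, has two real gaps.

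First, the localization step is not valid. You claim that stopping $(X,A)$ inside a compact set lets us take ``the derivatives of $F$ appearing on $C\cup D$'' uniformly bounded. But condition (\ref{con10}) bounds only the \emph{combination} $F_t+\mu_X F_x+\mu_A F_a+(\sigma^2/2)F_{xx}$, and individual terms (most importantly $F_{xx}$, but also $F_t$ and $F_a$) may blow up as $(t,a,x)$ approaches the surface $b$; this is precisely the situation in the free-boundary problems that motivate the theorem. Stopping $(t,A_t,X_t)$ inside a compact set does not keep it away from $\{x=b(t,a)\}$, so $F^n_t$, $F^n_a$, $F^n_{xx}$ need not converge term by term with a domination. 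The paper's proof never separates these derivatives: it passes the combined generator condition (\ref{assumption3}) through the Lebesgue--Stieltjes change of variables and the It\^o step as a single object, which is the mechanism that allows the required cancellations. Your proof splits them and so needs a stronger hypothesis than the theorem provides.

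Second, the occupation-time step is both inconsistent with your closing remark and problematic in itself. You write that under smooth fit ``we never actually touch the local time,'' yet the key step of your proof explicitly identifies the near-surface contribution of $\tfrac12\int F^n_{xx}\sigma^2\,\mathrm{d}s$ with $\tfrac12\int\bigl(F_x(\cdot,b+)-F_x(\cdot,b-)\bigr)\,\mathrm{d}_s\lt{b}{s}(X)$. That identification requires convergence $J^n_t\to\lt{0}{t}(X-b)$, which in turn requires the local time of $X-b$ to be right-continuous (indeed, continuous, since your mollifier is two-sided) in the space variable at $0$. For a general pure-jump L\'evy driver $Y$ --- so that $\sum_{0<s\le t}|\Delta X_s|$ may be infinite --- that regularity is exactly what Theorem \ref{jweaksmooth5} is designed to do without. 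The paper's proof of Theorem \ref{extendeditoform} never introduces the local time at all: the boundary terms are the explicit brackets $\bigl[nF_x(s-,A_{s-},\zeta+\tilde b^m_{s-})\rho(n(X_{s-}-\tilde b^m_{s-}-\zeta))\bigr]_{\zeta=-\epsilon}^{2\epsilon}$ multiplying $\mathrm{d}[X,X]^c_s$, and these already tend to zero as $\epsilon\to 0$ (with $m=m(\epsilon)$) by the smooth-fit hypothesis (\ref{assumption2}), before $n\to\infty$. Your detour could be avoided by observing that smooth fit makes the singular surface part of the distributional $\partial_x^2 F$ vanish, so $F^n_{xx}$ is just the smoothing of the classical $F_{xx}$ --- but even that does not repair the first gap, since $F_{xx}$ alone is not assumed locally integrable near $b$.
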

Note that the jumps of $X$ are the product of $\lambda$ and the jumps of $Y$, and so the final term above may also be written as
\begin{equation}
\begin{split}
\sum_{0 < s \leq t} \Big( F(s,A_s,X_{s}) - F(s-,A_{s-},X_{s-})  - (\lambda_X F_x)(s-,A_{s-},X_{s-}-) \Delta Y_s \Big).
\end{split}
\end{equation}

\section{Local time on curves for jump processes} \label{ltcsection}

Throughout we fix a filtered probability space $(\Omega,\mathcal{F},(\mathcal{F}_t)_{t \geq 0},\mathbb{P})$. Let $(t,A_t,X_t)_{t\geq0}$ be an $\mathcal{F}_t \,$-semimartingale, where $ (A_t)_{t \geq 0}$ is an $ \mathcal{F}_t $-adapted process of locally-bounded variation. We note that $X$ admits a decomposition
\begin{equation}
X = X_0 + K + M,
\end{equation}
into an $\mathcal{F}_t \,$ local martingale $M$, an $ \mathcal{F}_t $-adapted process $K$ of  locally-bounded variation, and an $ \mathcal{F}_0 $-measurable random variable $ X_0 $, such that $M_0 = K_0 = 0$. Define $b$ and sets $C$ and $D$ as given in (\ref{graphsets}) and (\ref{graphsets2}).

Let $F: \rplusrtor[2]$ be a continuous function such that:
\begin{gather}
F \text{ is } C^{1,1,2} \text{ on } {C} \label{weaksconditions1}, \\
F \text{ is } C^{1,1,2} \text{ on } {D} \label{weaksconditions2}.
\end{gather}
We write $ (b_t)_{t \geq 0}$ to mean the process given by $b_t = b(t,A_t)$ for $t \geq 0 $.

\begin{theorem} \label{firsttheorem}
In the setting given above, assume that the function $F$ obeys the following two criteria:
\begin{gather}
\hspace{-10pt} \text{The limits } F_x (t,a,b(t,a) \pm)\text{ exist for all }(t,a)\in\mathbb{R}_+ \times \mathbb{R}; \label{limitsassumption} \\
\hspace{-10pt} \text{The map } (t,a) \mapsto  F_x (t,a,b(t,a) +) -  F_x (t,a,b(t,a) -) \text{ is jointly continuous on }\mathbb{R}_+ \times \mathbb{R}. \label{joinctsassumption}
\end{gather}
Further assume that the surface $b$ process $A$ and semimartingale $X$ satisfy:
\begin{gather}
\hspace{-10pt}\text{The process } (b_t)_{t \geq 0} \text{ is of locally-bounded variation almost surely;} \\
\hspace{-10pt}\label{jumpass1} \text{We have } \spsum{0 < s \leq t} |\Delta X _s | < \infty \text{ for all } t \geq 0 \text{, almost surely.}
\end{gather}
If there exists a signed measure $\lambda $ on $\mathbb{R}_+$, with locally-finite total variation, and a locally-bounded function $H: \rplusrtor[2]$ such that $H(t,a,x-)$ exists for all $(t,a,x) \in \rplusr[2]$, which satisfy
\begin{equation}
\hspace{-15pt}\begin{split} \label{assumption}
&\spint{0}{t}{F_t(s-,A_{s-},X_{s-} \! - c) \indic{X_{s-}-\, c \, \notin \, (b_{s-} -  \epsilon \,,\, b_{s-} + \,\epsilon] \,}}{s}\\[1ex]
&+ \spint{0}{t}{F_a(s-,A_{s-},X_{s-}  \! - c) \indic{X_{s-}-\, c \, \notin \, (b_{s-} -  \epsilon \,,\, b_{s-} + \,\epsilon]\,}}{A^c_s} \\[1ex]
&+ \spint{0}{t}{F_x(s-,A_{s-},X_{s-}  \! - c) \indic{X_{s-}-\, c \, \notin \, (b_{s-} -  \epsilon \,,\, b_{s-} + \,\epsilon]\,}}{K^c_s}\\[1ex]
& + \frac{1}{2} \spint{0}{t}{F_{xx}(s-,A_{s-},X_{s-}  \! - c) \indic{X_{s-}-\, c \, \notin \, (b_{s-} -  \epsilon \,,\, b_{s-} + \,\epsilon]\,}}{[X,X]^c_s} \\[1ex]
 = &\spint{0}{t}{H(s-,A_{s-},X_{s-} \! - c) \indic{X_{s-}-\, c \, \notin \, (b_{s-} -  \epsilon \,,\, b_{s-} + \,\epsilon]\,}}{\lambda(s)},
\end{split}
\end{equation}
for all $\epsilon > 0$ and all $0 < c \leq \delta$ for some fixed $\delta > 0$, then we have
\begin{equation} \label{ltcformula}
\begin{split}
\hspace{-15pt}F(t,A_t,X_t) = \,\, &F(0,A_0,X_0)  \,+\! \spint{0}{t}{H(s-,A_{s-},X_{s-}-)}{\lambda(s)} + \!\spint{0}{t}{F_x(s-,A_{s-},X_{s-} -)}{M_s} \\
&+ \frac12 \, \spint{0}{t}{\big( F_x(s-,A_{s-},b_{s-}+) - F_x(s-,A_{s-},b_{s-}-) \big)}{_s \lt{b}{s}(X)} \\[0.8ex]
&+ \sum_{0 < s \leq t} \Big( F(s,A_s,X_{s}) - F(s-,A_{s-},X_{s-})  - F_x(s-,A_{s-},X_{s-}-) \Delta M_s \Big),
\end{split}
\end{equation}
for all $t \geq 0$.
\end{theorem}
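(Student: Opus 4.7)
The plan is to adapt the shift-and-buffer strategy of Du Toit and Peskir: apply the classical It\^o formula to the translated integrand $F(t, a, x - c)$ along $(t, A_t, X_t)$ for a small shift $c > 0$, restricted to times when $X_{s-} - c$ lies outside a buffer $(b_{s-} - \epsilon, b_{s-} + \epsilon]$ around $b$ (equivalently, when $X_{s-}$ stays away from the shifted kink $b + c$ by $\epsilon$), and then pass $\epsilon \downarrow 0$ followed by $c \downarrow 0$ to recover the claimed formula.

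First I would apply It\^o's formula pathwise on each excursion during which $X_{s-} - c$ avoids the buffer. Since $F$ is $C^{1,1,2}$ on each of the open sets $C$ and $D$, the classical expansion is justified because the shifted path lies strictly inside one component on each such excursion. The four drift-type contributions that appear are exactly the left-hand side of (\ref{assumption}), so the hypothesis collapses them into the single integral $\int_0^t H(s-, A_{s-}, X_{s-} - c)\, \mathbbm{1}_{\{\cdots\}}\, d\lambda(s)$. What remains is the martingale integral $\int_0^t F_x(s-, A_{s-}, X_{s-} - c)\, \mathbbm{1}_{\{\cdots\}}\, dM_s$, together with, using (\ref{jumpass1}) so that jumps may be summed absolutely, the jump sum $\sum_{0<s \leq t}[F(s, A_s, X_s - c) - F(s-, A_{s-}, X_{s-} - c) - F_x(s-, A_{s-}, X_{s-} - c) \Delta M_s]$.

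Next, I would send $\epsilon \downarrow 0$. On $\{X_{s-} - c \neq b_{s-}\}$ the indicator converges monotonically to $1$; local boundedness of $H$ combined with the local finiteness of the total variation of $\lambda$ permits dominated convergence for the drift piece, and an analogous argument handles the martingale integral. No local time term has appeared at this stage, because the strict shift $c > 0$ keeps the path off the kink of $F(\cdot, \cdot, \cdot - c)$ except at isolated times.

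The final and most delicate step is to send $c \downarrow 0$. Here the local time contribution emerges through a Meyer--It\^o-type mechanism: the kink of $F(t, a, \cdot - c)$ lives along the shifted surface $b + c$, and the right-local time of $X - b$ at level $c$ (equivalently, the right-local time of $X - b - c$ at $0$) converges as $c \downarrow 0$ to $\ell^b_s(X)$, whose right-continuity in the space variable is guaranteed by (\ref{jumpass1}). The pointwise convergence $F_x(s-, A_{s-}, X_{s-} - c) \to F_x(s-, A_{s-}, X_{s-}-)$ follows from (\ref{limitsassumption}), while (\ref{joinctsassumption}) ensures that the limiting integrand $\tfrac{1}{2}(F_x(s-, A_{s-}, b_{s-}+) - F_x(s-, A_{s-}, b_{s-}-))$ is continuous along the random path and thus integrable against $d_s \ell^b_s(X)$. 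The main obstacle is precisely this final passage: identifying the residual contribution from the sectional kink of $F$ as a local time integral requires uniform estimates combining the one-sided space limits in (\ref{limitsassumption}), the joint continuity in (\ref{joinctsassumption}), and the right-continuity of local time under (\ref{jumpass1}); a parallel dominated-convergence argument for the jump sum, using local bounds on $F$ near the surface together with (\ref{jumpass1}), then closes the proof.
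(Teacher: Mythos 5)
Your proposal is a shift-and-buffer scheme in the spirit of Peskir's linear-interpolation argument, not the mollification-and-partition method the paper actually uses, and it has a genuine gap.

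Applying It\^o pathwise on the time intervals where $X_{s-}-c$ avoids the buffer and then summing those increments does not reconstruct $F(t,A_t,X_t-c)-F(0,A_0,X_0-c)$: the increments of $F$ accumulated while $X_{s-}-c$ is inside the buffer---across excursion endpoints---are left unaccounted. These are exactly the terms that eventually produce the local time, yet you drop them and assert that ``no local time term has appeared'' after $\epsilon\downarrow0$. That cannot be right: for fixed $c>0$ the function $F(\cdot,\cdot,\cdot-c)$ still kinks along the shifted surface, and those buffer increments contribute nontrivially. Controlling such a boundary sum uniformly in $\epsilon$ is precisely what forced Peskir to assume the variation condition (\ref{weirdvarcond}), a hypothesis Theorem \ref{firsttheorem} deliberately drops; without it, or a substitute, the argument does not close. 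Moreover, applying It\^o's formula for a jump semimartingale on each excursion requires the function to be $C^{1,1,2}$ not only at $X_{s-}$ but also at the post-jump position $X_s$, which fails whenever a jump lands in the buffer or crosses $b$; your claim that ``the shifted path lies strictly inside one component on each such excursion'' is false in the presence of jumps.

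The paper avoids both problems by never excising time. It replaces $F$ with convolution approximations $G^{n,m,\epsilon}$, $H^{n,m,\epsilon}$, truncated at offsets $2\epsilon$ and $-\epsilon$ from the surface and globally smooth in $x$; applies It\^o along a refining time partition, freezing $(t,a)$ for the stochastic increment and $x$ for the bounded-variation increment; and keeps the truncation boundary terms explicit. After $\epsilon\to0$ the surviving boundary term takes the form $\tfrac12\int_0^t \bigl(F_x(s-,A_{s-},b_{s-}+)-F_x(s-,A_{s-},b_{s-}-)\bigr)\, n\rho(n(X_{s-}-b_{s-}))\, \mathrm{d}[X-b,X-b]^c_s$, and the proof then identifies its limit with $\tfrac12\int_0^t \bigl(F_x(s-,A_{s-},b_{s-}+)-F_x(s-,A_{s-},b_{s-}-)\bigr)\, \mathrm{d}_s\lt{b}{s}(X)$ via the occupation-time formula, right-continuity of the local time at level $0$ under (\ref{jumpass1}), and a uniform step-function approximation of the regulated path $s\mapsto F_x(s,A_s,b_s+)-F_x(s,A_s,b_s-)$ justified by (\ref{joinctsassumption}). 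That identification is the content your sketch lacks.
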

Proof of this theorem is given in the next section. Note that condition (\ref{assumption}) is the general equivalent of local boundedness of the infinitesimal generator, and hence can be reduced to (\ref{con3}) when $X$ and $A$ solve appropriate SDEs - see Theorems \ref{jweaksmooth4} and \ref{jweaksmooth5} for clarification in the most common cases. More generally, if we have that $\mathrm{d} A^c_s, \, \mathrm{d} K^c_s$ and $\mathrm{d} \left[ X, X \right]^c_s$ all absolutely continuous with respect to Lebesgue measure, then instead it suffices to check that 
\begin{equation}
\begin{split}
&\left( F_t + F_a \frac{\mathrm{d}A^c_s}{\mathrm{d}s} + F_x \frac{\mathrm{d}K^c_s}{\mathrm{d}s} + \frac{1}{2} F_{xx} \frac{\mathrm{d}[X,X]^c_s}{\mathrm{d}s} \right) \indic{x -\, c \, \notin \, (b(s,a) -  \epsilon \,,\, b(s,a)  + \,\epsilon] \,}
\\ & \hspace{20pt}= H \indic{x -\, c \, \notin \, (b(s,a) -  \epsilon \,,\, b(s,a)  + \,\epsilon] \,},
 \end{split}
\end{equation}
for $H,c,\epsilon$ as in the Theorem. This is clearly implied by local boundedness with left limits of
\begin{equation}
F_t + F_a \frac{\mathrm{d}A^c_s}{\mathrm{d}s} + F_x \frac{\mathrm{d}K^c_s}{\mathrm{d}s} + \frac{1}{2} F_{xx} \frac{\mathrm{d}[X,X]^c_s}{\mathrm{d}s},
\end{equation}
the generalised infinitesimal generator. In this case that it is not possible to directly combine these quantities as above, we have the following Remark which implies that the equivalent condition on each quantity alone suffices. Further, the other Remarks give further generalisations and remove technical barriers.
\begin{remark} \label{firstremark}
Considering condition (\ref{assumption}) above, we may generalize to the case where we have finitely many locally-bounded functions $H_1, \dots , H_n  : \rplusrtor[2]$, and signed measures $\lambda_1,\dots,\lambda_n$ on $\mathbb{R}_+$, with locally-finite total variation, such that $H_i(t,a,x-)$ exists for each $(t,a,x) \in \rplusr[2]$, where $i=1,\dots,n$, which satisfy
\begin{equation}
\hspace{-15pt}\begin{split}
&\spint{0}{t}{F_t(s-,A_{s-},X_{s-} \! - c) \indic{X_{s-}-\, c \, \notin \, (b_{s-} -  \epsilon \,,\, b_{s-} + \,\epsilon] \,}}{s}\\[1ex]
&+ \spint{0}{t}{F_a(s-,A_{s-},X_{s-}  \! - c) \indic{X_{s-}-\, c \, \notin \, (b_{s-} -  \epsilon \,,\, b_{s-} + \,\epsilon]\,}}{A^c_s} \\[1ex]
&+ \spint{0}{t}{F_x(s-,A_{s-},X_{s-}  \! - c) \indic{X_{s-}-\, c \, \notin \, (b_{s-} -  \epsilon \,,\, b_{s-} + \,\epsilon]\,}}{K^c_s}\\[1ex]
& + \frac{1}{2} \spint{0}{t}{F_{xx}(s-,A_{s-},X_{s-}  \! - c) \indic{X_{s-}-\, c \, \notin \, (b_{s-} -  \epsilon \,,\, b_{s-} + \,\epsilon]\,}}{[X,X]^c_s} \\[1ex]
 = \,&\sum_{i\,=\,1}^n \, \spint{0}{t}{H_i(s-,A_{s-},X_{s-} \! - c) \indic{X_{s-}-\, c \, \notin \, (b_{s-} -  \epsilon \,,\, b_{s-} + \,\epsilon]\,}}{\lambda_i(s)}, \\[1ex]
\end{split}
\end{equation}
for all $\epsilon > 0$ and all $0 < c \leq \delta$ for some fixed $\delta > 0$. The resultant formula consequently changes in the obvious way.
\end{remark}

\begin{remark} \label{limitremark}
We may also relax condition (\ref{assumption}), requiring only that the left-limits of $H$ exist for all $t$ outside of a $\lambda$-null set, almost surely.
\end{remark} 

\begin{remark} \label{thirdremark}
We may further allow dependence of $H,\,F,\,b$ and $\lambda$ on the underlying probability space, provided they are respectively adapted, predictably measurable processes and a random measure, and obey the conditions of the theorem almost surely.
\end{remark}

\begin{remark} \label{fourthremark}Theorem \ref{firsttheorem} extends in a straightforward manner when we allow dependence on finitely many one-dimensional processes of locally-bounded variation. That is, if we allow one-dimensional processes of locally-bounded variation $ A^1,\dots,A^k $, and a general semimartingale $X$, the surface $b$ is then defined on the domain $\mathbb{R}_+ \times \mathbb{R}^k$ and takes values in $\mathbb{R}$. We may also allow finitely many non-intersecting surfaces, under the obvious extension of the weak smoothness conditions. Note that (\ref{assumption}) also changes correspondingly. 
\end{remark}

\begin{remark} \label{penultimateremark}
In reference to Remark 3.2 of \cite{peskir05}, note that (\ref{limitsassumption}) is implied by convexity or concavity of $F$ on $[b(s,a)-\delta,b(s,a)]$ and $[b(s,a),b(s,a)+\delta]$, for some fixed $ \delta > 0 $, uniformly over $(s,a) \in \rplusr$. 
\end{remark}

\begin{remark} \label{lastremark}
We may replace the use of the right local time by the left or symmetric local time, provided that we replace the left limits in the space variable by the right or symmetric limits respectively. Note especially that the range of the variable $c$ in (\ref{assumption}) must also be changed correspondingly. If we are also using the weaker conditions on $X$ given by Remark \ref{jumpremark}, then these conditions must be replaced by the obvious right or two-sided versions.
\end{remark}

\begin{remark} \label{jumpremark}
The condition (\ref{jumpass1}) is connected to the problem of continuity of the local time process of $ X $. We may generalise by replacing (\ref{jumpass1}) by the following two conditions:
\begin{gather}
\hspace{-10pt} X - b \text{ admits a local time which is right-continuous in space} \text{ at } 0 \text{. That is, } \\[0.6em] \nonumber \lim_{a \downarrow 0} \lt{a}{s} = \lt{0}{s} \, \text{ for all } 0  \leq s  \leq t;\\
\hspace{-10pt} \text{We have } \spsum{0 < s \leq t} |\Delta X _s | \indic{X_{s-} = \, b_{s-}, \, X_s \, >\, b_s} < \infty \text{ for all } t \geq 0 \text{, almost surely.}
\end{gather}
\end{remark}
The existence of such a local time, in the case of L\'evy processes, has been studied by many authors. In particular, see \cite{barlow88} for necessary and sufficient conditions. It is already noted implicitly in the conditions of Theorem \ref{firsttheorem} that processes with so-called `bounded variation of jumps', namely those satisfying (\ref{jumpass1}), admit such a local time. A further special case consists of the $\alpha$-stable L\'evy processes, for $1<\alpha<2$, which also admit such a local time, a result given in \cite{boylan64}.

\section{An extension of It\^o's formula} \label{itoextension}

The local time on curves formula of Section \ref{ltcsection} provides an alternative to It\^o's formula when the function of interest contains a discontinuity in the space derivative over a surface. Formally one can see that, in the smooth-fit case, meaning vanishing of (\ref{smoothfit}), the integral term from (\ref{ltcformula}) which includes the local time vanishes, meaning we may relax the condition that $(b_t)_{t \geq0}$ be of locally-bounded variation. Unless, for example, $b$ is Lipschitz (meaning $(b_t)_{t \geq0}$ is automatically of bounded variation whenever $A$ is), proving that $(b_t)_{t\geq0}$ is of locally-bounded variation can be a difficult task. Relaxing this condition in a more general way shortens many results which employ techniques based on specific cases.

This idea leads to the following theorem, which is now an extension of the classical It\^o formula, not including the local time.
\begin{theorem} \label{extendeditoform}
In the setting of Section \ref{ltcsection}, assume that:
\begin{gather}
\hspace{-10pt}\text{The limits } F_x (t,a,b(t,a) \pm)\text{ exist for all }(t,a)\in\mathbb{R}_+ \times \mathbb{R}; \\
\label{assumption2} \hspace{-10pt}\text{The map } (t,a) \mapsto  F_x (t,a,b(t,a) +) -  F_x (t,a,b(t,a) -) \text{ is identically zero  on } \mathbb{R}_+ \times \mathbb{R}. 
\end{gather}
If there exists a signed measure $\lambda $ on $\mathbb{R}_+$, with locally-finite total variation, and a locally-bounded function $H: \rplusrtor[2]$ such that $H(t,a,x-)$ exists for all $(t,a,x) \in \rplusr[2]$, which satisfy
\begin{equation}
\hspace{-15pt}\begin{split} \label{assumption3}
&\spint{0}{t}{F_t(s-,A_{s-},X_{s-} \! - c) \indic{X_{s-}-\, c \, \notin \, (b_{s-} -  \epsilon \,,\, b_{s-} + \,\epsilon] \,}}{s}\\[1ex]
&+ \spint{0}{t}{F_a(s-,A_{s-},X_{s-}  \! - c) \indic{X_{s-}-\, c \, \notin \, (b_{s-} -  \epsilon \,,\, b_{s-} + \,\epsilon]\,}}{A^c_s} \\[1ex]
&+ \spint{0}{t}{F_x(s-,A_{s-},X_{s-}  \! - c) \indic{X_{s-}-\, c \, \notin \, (b_{s-} -  \epsilon \,,\, b_{s-} + \,\epsilon]\,}}{K^c_s}\\[1ex]
& + \frac{1}{2} \spint{0}{t}{F_{xx}(s-,A_{s-},X_{s-}  \! - c) \indic{X_{s-}-\, c \, \notin \, (b_{s-} -  \epsilon \,,\, b_{s-} + \,\epsilon]\,}}{[X,X]^c_s} \\[1ex]
 = &\spint{0}{t}{H(s-,A_{s-},X_{s-} \! - c) \indic{X_{s-}-\, c \, \notin \, (b_{s-} -  \epsilon \,,\, b_{s-} + \,\epsilon]\,}}{\lambda(s)},
\end{split}
\end{equation}
for all $\epsilon > 0$ and all $0 < c \leq \delta$ for some fixed $\delta > 0$, then we have
\begin{equation} 
\begin{split}
\hspace{-15pt}F(t,A_t,X_t) = \,\, &F(0,A_0,X_0)  \,+\! \spint{0}{t}{H(s-,A_{s-},X_{s-}-)}{\lambda(s)} + \!\spint{0}{t}{F_x(s-,A_{s-},X_{s-} -)}{M_s} \\[1ex]
&+ \sum_{0 < s \leq t} \Big( F(s,A_s,X_{s}) - F(s-,A_{s-},X_{s-})  - F_x(s-,A_{s-},X_{s-}-) \Delta M_s \Big)
\end{split}
\end{equation}
for all $t \geq 0$.
\end{theorem}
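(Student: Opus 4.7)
The plan is to deduce Theorem~\ref{extendeditoform} from Theorem~\ref{firsttheorem} by a surface-regularization argument. The central observation is that under the smooth-fit condition~(\ref{assumption2}) the integrand of the local-time term appearing in (\ref{ltcformula}) is identically zero, so that term contributes nothing to the right-hand side regardless of whether $_s \lt{b}{s}(X)$ is well-defined in the classical sense. Consequently the two hypotheses of Theorem~\ref{firsttheorem} that served only to control the local time---namely locally-bounded variation of $(b_t)_{t \geq 0}$ and the bounded jump-variation condition (\ref{jumpass1})---may be dispensed with here, which is precisely the gain in the statement.

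To make this rigorous, I first regularize $b$ by mollification in the $a$ variable, producing a sequence $b^{(n)}: \mathbb{R}_+ \times \mathbb{R} \to \mathbb{R}$ that is Lipschitz in $a$ uniformly on compact sets and converges locally uniformly to $b$. By the result of Josephy cited after Theorem~\ref{jweaksmooth4}, the composed processes $b^{(n)}_t = b^{(n)}(t,A_t)$ are then of locally-bounded variation. I transfer the function correspondingly by setting
\begin{equation*}
F^{(n)}(t,a,x) = F\!\left(t,\, a,\, x + b(t,a) - b^{(n)}(t,a)\right),
\end{equation*}
so that $F^{(n)}$ obeys the weak smoothness conditions relative to $b^{(n)}$, its sectional limits $F^{(n)}_x(t,a,b^{(n)}(t,a)\pm)$ coincide with $F_x(t,a,b(t,a)\pm)$, and smooth fit persists across $b^{(n)}$. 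Condition (\ref{assumption3}) transfers with a modified but locally bounded $H^{(n)}$ whose corrections come from the bounded derivatives of the mollified surface.

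Next I dispose of the jumps of $X$ by a stopping argument. Fix $T > 0$ and stop $X$ at the first time at which its accumulated absolute jumps exceed $k$, producing $X^{(k)}$ that satisfies (\ref{jumpass1}) on $[0,T]$. For each pair $(n,k)$ the full hypotheses of Theorem~\ref{firsttheorem} are in force, so it supplies formula (\ref{ltcformula}) for the triple $(F^{(n)}, b^{(n)}, X^{(k)})$ in which the local-time term vanishes by smooth fit. Passing $k \to \infty$ uses that only finitely many jumps above any positive threshold occur on $[0,T]$, so the stopping times diverge almost surely; then $n \to \infty$ uses locally uniform convergence $b^{(n)} \to b$ together with dominated convergence for each surviving integral, with condition (\ref{assumption3}) and the local boundedness of $H$ providing the dominations.

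The hard part will be justifying convergence of the jump sum
\begin{equation*}
\sum_{0 < s \leq t} \Big( F(s,A_s,X_s) - F(s-,A_{s-},X_{s-}) - F_x(s-,A_{s-},X_{s-}-)\,\Delta M_s \Big)
\end{equation*}
without the bounded-jump-variation condition on $X$. Summability must come from the global continuity of $F_x$ in $x$ (a consequence of smooth fit) combined with a second-order Taylor estimate and the $L^2$-summability $\sum_{s \leq t} |\Delta X_s|^2 < \infty$, which is valid for any semimartingale. Convergence along the approximation is then controlled by pointwise convergence of $F^{(n)}$ and $F^{(n)}_x$ majorized by this summable bound, with special care needed for jumps that straddle the surface $b$, where smooth fit is again decisive in killing what would otherwise be a boundary contribution.
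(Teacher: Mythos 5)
Your proposal attempts to deduce Theorem~\ref{extendeditoform} as a corollary of Theorem~\ref{firsttheorem} by regularizing the surface and stopping the jumps. The paper goes in exactly the opposite direction: Theorem~\ref{extendeditoform} is proved first, directly, by a convolution/mollification argument applied to $F$ across a Moreau-envelope approximation $\tilde b^m$ of $b$, and Theorem~\ref{firsttheorem} is then obtained by modifying \emph{that} proof (tracking the boundary terms that no longer vanish and showing they produce the local time). Quite apart from the direction of deduction, there are two concrete gaps in your plan that I do not see how to close.

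First, the stopping argument does not produce a process satisfying condition~(\ref{jumpass1}). You propose stopping $X$ at the first time the accumulated absolute jump variation $\sum_{s \leq \cdot} |\Delta X_s|$ exceeds $k$. But the whole point of removing~(\ref{jumpass1}) is to handle semimartingales for which this sum is \emph{infinite on every time interval of positive length} (e.g.\ symmetric $\alpha$-stable processes with $1 < \alpha < 2$). For such $X$ your stopping time equals $0$ for every $k$, the stopped process is constant, and the stopping times do not diverge. Your justification that ``only finitely many jumps above any positive threshold occur on $[0,T]$'' is true but irrelevant: it controls the number of large jumps, not the absolute summability of the small ones, which is exactly what~(\ref{jumpass1}) asserts and what a general semimartingale can lack. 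What is always available is $\sum_{s \leq t} |\Delta X_s|^2 < \infty$, and the paper's direct proof is structured so that the jump sum converges using this quadratic summability together with the BV jumps of $A$ and $K$; you allude to this at the end but it does not rescue the stopping scheme.

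Second, the transfer $F^{(n)}(t,a,x) = F(t,a, x + b(t,a) - b^{(n)}(t,a))$ does not obey the weak smoothness conditions relative to $b^{(n)}$. For $F^{(n)}$ to be $C^{1,1,2}$ on the open sets on either side of the graph of $b^{(n)}$ one needs $(t,a) \mapsto b(t,a) - b^{(n)}(t,a)$ to be $C^{1,1}$, but $b$ is assumed only continuous and the difference of a continuous function and its mollification inherits no differentiability in $(t,a)$. So $F^{(n)}$ is not even once differentiable in $t$ or $a$ away from the surface, and Theorem~\ref{firsttheorem} cannot be applied to it. The paper sidesteps this by mollifying the \emph{function} in the $x$-variable (shifted by $\tilde b^m(s,a)$) rather than composing $F$ with a translate of the surface, so the regularity of $b$ in $(t,a)$ is never invoked for the smoothness of the approximants; the only thing required of $\tilde b^m$ is that $\tilde b^m(\cdot,A_\cdot)$ be of locally bounded variation so the pathwise Lebesgue--Stieltjes chain rule applies.
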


Finally, let us note that Remarks \ref{firstremark}\,--\,\ref{penultimateremark} still hold in this new setting, with obvious modifications to the conditions required.

\begin{proof}[Proof of Theorem \ref{extendeditoform}]  \label{proof}

The proof follows the method of Du Toit \cite{dutoit}, making adjustments for the existence of jumps. The method of proof is to approximate the process $t \mapsto b_t = b(t,A_t)$ pathwise, from above and below, by a process of locally-bounded variation. After truncating and smoothing the function $F$, we then take limits to return to the original problem. Smoothing and truncation allows us to apply the Lebesgue-Stieltjes chain rule, and standard It\^o formula, whereupon we can combine derivatives in the form of (\ref{assumption3}). Boundary terms appear from the truncation, which either vanish, or converge to the local-time correction term in  (\ref{ltcformula}).

We assume, through localisation, that the semimartingale $(t,A_t,X_t)_{t\geq 0}$ is bounded, and therefore takes values in a compact set. Let $\rho : \mathbb{R} \rightarrow \mathbb{R}$ be a function which is supported on $[0,1]$, such that $\int_{\mathbb{R}} \rho(y) \, \mathrm{d}y = 1$.

If $t \mapsto b_t$ is of locally-bounded variation, define $\tilde{b}^m = b$ for each $m \in \mathbb{N}$. If not, we define the Moreau envelope of $b$ for each $m \in \mathbb{N}$  by
\begin{equation} \tilde{b}^m(t,a) = \inf_{(s,y) \,\in \,\, \rplusr} \left\{ \,  b(s,y) + \frac{1}{2m} \|(t,a) - (s,y) \|^2 \, \right\} ,\end{equation}
where $\| \cdot \|$ is the usual Euclidean norm. Intuitively, this approximation is the vector sum of the epigraphs of the surface $b$ and $\frac{1}{2m}\| x-y \|^2 $. 

It is known (for example, \cite{rockafellar98}) that the Moreau envelope of a continuous function is Lipschitz, and converges pointwise monotonically to $b$ from below as $ m \to \infty $. By Dini's theorem, this implies uniform convergence on compact sets. 

It has also been shown in the one-dimensional case by Josephy \cite{josephy81} that the composition of a Lipschitz function with a process of locally-bounded variation is again of locally-bounded variation. We observe that this generalises to higher dimensions by considering increments in each variable separately.

Fix $\epsilon > 0$, and $n \in \mathbb{N}$. By uniform convergence, and recalling the localisation above, we can choose $m$ large enough such that $\|\tilde{b}^m - b \| < \epsilon$ on the compact set containing $(t,A_t)$.  We define two convolution approximations to $F$, on either side of the surface $\tilde{b}^m$, by
\begin{equation}
\begin{split}
G^{n,m,\epsilon} (s,a,x) = &\spint{-\infty}{\infty}{F(s,a, x - z / n+ \tilde{b}^m(s,a)) \, \indic{x - z/n > 2\epsilon} \, \rho(z)}{z} 
\\=& \spint{-\infty}{\infty}{ n \,F(s,a , k + \tilde{b}^m(s,a))   \indic{k > 2\epsilon} \, \rho(n(x-k))}{k},
\end{split}
\end{equation}
\begin{equation}
\begin{split}
H^{n,m,\epsilon} (s,a,x) = &\spint{-\infty}{\infty}{F(s,a, x - z / n+ \tilde{b}^m(s,a)) \, \indic{x - z/n \leq -\epsilon} \, \rho(z)}{z} 
\\=& \spint{-\infty}{\infty}{ n \,F(s,a , k + \tilde{b}^m(s,a))   \indic{k \leq -\epsilon} \, \rho(n(x-k))}{k}.
\end{split}
\end{equation}
In anticipation of letting $\epsilon \to 0$, we also define 
\begin{equation}
\begin{split}
G^{n,m} (s,a,x) = &\spint{-\infty}{\infty}{F(s,a, x - z / n+ \tilde{b}^m(s,a)) \, \indic{x - z/n > 0} \, \rho(z)}{z} 
\\=& \spint{-\infty}{\infty}{ n \,F(s,a , k + \tilde{b}^m(s,a))   \indic{k > 0} \, \rho(n(x-k))}{k},
\end{split}
\end{equation}
\begin{equation}
\begin{split}
H^{n,m} (s,a,x) = &\spint{-\infty}{\infty}{F(s,a, x - z / n+ \tilde{b}^m(s,a)) \, \indic{x - z/n \leq 0} \, \rho(z)}{z} 
\\=& \spint{-\infty}{\infty}{ n \,F(s,a , k + \tilde{b}^m(s,a))   \indic{k \leq 0} \, \rho(n(x-k))}{k}.
\end{split}
\end{equation}

We deal only with the $G$ approximations - note that the same arguments apply to the $H$ approximations. It is easily verified that these approximations have $x$ derivatives of all orders, and that:
\begin{align}
&\lim_{\epsilon \to 0} \, G^{n,m,\epsilon}(s,a,x) = G^{n,m} (s,a,x); \\
&\!\lim_{n \to \infty}\, \lim_{m \to \infty} \, G^{n,m} (s,a,x) + H^{n,m} (s,a,x) = F(s,a,x + \tilde{b}^m(s,a)). \label{finallimit}
\end{align}

Fix $t>0$. Define an arbitrary sequence of refining partitions of the interval $[0,t]$ whose mesh tends to zero. Specifically, for each $\tilde{n} \in \mathbb{N}$, define $T^{\tilde{n}} = \{ t_{0}^{\tilde{n}} < \dots < t_{\tilde{n}}^{\tilde{n}} \}$, such that $T^{\tilde{n}} \subset T^{\tilde{n}+1}$, and $\max_{1\leq i \leq \tilde{n}} | t^{\tilde{n}}_i - t^{\tilde{n}}_{i-1} | \rightarrow 0$ as $\tilde{n} \to \infty$.

We deal first with an arbitrary semimartingale $Y$, which will allow us to make an appropriate substitution later. Let us approximate $G^{n,m,\epsilon}$ applied to $(t,A,Y)$ by splitting it into purely stochastic and purely bounded variation increments across the partition $T^{\tilde{n}}$. Consider a single increment of $G^{n,m,\epsilon}$ temporarily fix $\tilde{n} \in \mathbb{N}$, and write $T = T^{\tilde{n}} = \{ 0=t_0 < t_1 \leq \dots < t_{\tilde{n}} = t \}$. Lack of smoothness prevents us from directly applying It\^o's formula. However, we can write this increment as
\begin{align*}
\label{original}
&G^{n,m,\epsilon}(t_i,A_{t_i},Y_{t_i}) - G^{n,m,\epsilon}(t_{i-1},A_{t_{i-1}},Y_{t_{i-1}}) =\\[1ex]
\; \stepcounter{equation}\tag{\theequation}  &G^{n,m,\epsilon}(t_i,A_{t_i},Y_{t_i}) - G^{n,m,\epsilon}(t_{i-1},A_{t_{i-1}},Y_{t_{i}}) \\[1ex]
&+ \; G^{n,m,\epsilon}(t_{i-1},A_{t_{i-1}},Y_{t_{i}}) - G^{n,m,\epsilon}(t_{i-1},A_{t_{i-1}},Y_{t_{i-1}}).
\end{align*}

The observation that the latter (stochastic) increment is adapted allows us to make use of It\^o's formula. We can then apply the usual deterministic calculus pathwise to the non-adapted bounded variation increment. More precisely, apply the extended It\^o formula from \cite[Thm. 18, pg. 278]{protter04}, yielding
\begin{equation} \hspace{-15pt}
\label{rep1}
\begin{split}
& \; G^{n,m,\epsilon}(t_{i-1},A_{t_{i-1}},Y_{t_{i}}) - G^{n,m,\epsilon}(t_{i-1},A_{t_{i-1}},Y_{t_{i-1}}) = \\[1ex]
& \; \spint{t_{i-1}}{t_i}{G^{n,m,\epsilon}_{x} (t_{i-1},A_{t_{i-1}},Y_{s-})}{Y_s} + \frac{1}{2} \spint{t_{i-1}}{t_i}{G^{n,m,\epsilon}_{xx} (t_{i-1},A_{t_{i-1}},Y_{s-})}{[Y,Y]^c_s} \\[1ex]
&+ \, \spsum{t_{i-1} <\, s \, \leq \, t_i} \;\; \, \Big( G^{n,m,\epsilon}(t_{i-1},A_{t_{i-1}},Y_{s}) - G^{n,m,\epsilon}(t_{i-1},A_{t_{i-1}},Y_{s-})  - G^{n,m,\epsilon}_{x} (t_{i-1},A_{t_{i-1}},Y_{s-}) \Delta Y_s \Big).
\end{split}
\end{equation}
To deal with the deterministic increment, we apply the Lebesgue-Stieltjes change of variables pathwise, to obtain
\begin{equation} \label{rep2} \hspace{-15pt}
\begin{split}
& G^{n,m,\epsilon}(t_i,A_{t_i},Y_{t_i}) - G^{n,m,\epsilon}(t_{i-1},A_{t_{i-1}},Y_{t_{i}}) =  \\
&\spint{-\infty}{\infty}{ \Bigg\{
 \spint{t_{i-1}}{t_i}{ F_s(s-,A_{s-}, Y_{t_{i}} - z / n+ \tilde{b}^m_{s-}) }{s} + \spint{t_{i-1}}{t_i}{ F_a(s-,A_{s-}, Y_{t_{i}} - z / n+ \tilde{b}^m_{s-}) }{A^c_s} \\ &+ \spint{t_{i-1}}{t_i}{ F_x(s-,A_{s-}, Y_{t_{i}} - z / n+ \tilde{b}^m_{s-}) }{\tilde{b}^{m,c}_s}  \\[1ex] &+ \spsum{t_{i-1} <\, s \, \leq t_i} \; F(s,A_{s},Y_{t_i} - z/n + \tilde{b}^m_{s})  - F(s-,A_{s-},Y_{t_i} - z/n + \tilde{b}^m_{s-})  \Bigg\}\, \indic{Y_{t_i} - z/n \,>\, 2\epsilon}  \, \rho(z)}{z},
\end{split}
\end{equation}
 where $\mathrm{d}\tilde{b}^{m,c}_s$ is the continuous part of the measure $\mathrm{d} \tilde{b}^m_s$. Note that the presence of the indicator function ensures each non-zero increment is in the area where $F$ is $C^{1,1,2}$. 

With the aim of combining the representations (\ref{rep1}) and (\ref{rep2}), we calculate the $x$ derivatives in (\ref{rep1}) directly. Write $G^{n,m,\epsilon}$ as 
\begin{equation}
\begin{split}
G^{n,m,\epsilon} (s,a,x) = \spint{2 \epsilon}{\infty}{ n \,F(s,a , k + \tilde{b}^m(s,a))  \, \rho(n(x-k))}{k}.
\end{split}
\end{equation}
Now we can differentiate under the integral sign in $x$. Integrating by parts in $k$, noting that $F$ is $C^{1,1,2}$ where required, then changing variables, we obtain
\begin{equation}
\begin{split}
 G^{n,m,\epsilon}_x (s,a,x) = \;&n \, F(s,a , 2 \epsilon + \tilde{b}^m(s,a))  \, \rho(n(x-2 \epsilon)) \\[1em]
 &+ \spint{-\infty}{\infty}{F_x(s,a,x - z/n + \tilde{b}^m(s,a)) \, \indic{x \,- \frac{z}{n} > \, 2 \epsilon} \, \rho(z) }{z} .
\end{split}
\end{equation}
Repeating this procedure gives
\begin{equation}
\begin{split}
  G^{n,m,\epsilon}_{xx} (s,a,x) = \;& n^2 \, F(s,a , 2 \epsilon + \tilde{b}^m(s,a))  \, \rho'(n(x-2 \epsilon)) \\[1ex] &+ n \, F_x(s,a , 2 \epsilon + \tilde{b}^m(s,a))  \, \rho(n(x-2 \epsilon))\\[0.7ex]
 &+ \spint{-\infty}{\infty}{F_{xx}(s,a,x - z/n + \tilde{b}^m(s,a)) \, \indic{x \, -  \frac{z}{n} > \, 2 \epsilon} \, \rho(z) }{z}.
\end{split}
\end{equation}

We may now substitute into the original expression (\ref{original}), making use of the deterministic and stochastic Fubini theorems (see \cite[Thms. 64--65, pg. 210--213]{protter04}), yielding
\begin{align*} \label{bigproof}
 & \hspace{-16pt} G^{n,m,\epsilon}(t_i,A_{t_i},Y_{t_i}) - G^{n,m,\epsilon}(t_{i-1},A_{t_{i-1}},Y_{t_{i-1}}) = \\[1ex]
\hspace{10pt} & \spint{t_{i-1}}{t_i}{n \,F(t_{i-1},A_{t_{i-1}} , 2 \epsilon + \tilde{b}^m_{t_{i-1}})  \, \rho(n(Y_{s-}-2 \epsilon))}{Y_s}  \\[1ex]
&+ \frac{1}{2} \spint{t_{i-1}}{t_i}{n^2 \,F(t_{i-1},A_{t_{i-1}} , 2 \epsilon + \tilde{b}^m_{t_{i-1}})  \, \rho'(n(Y_{s-}-2 \epsilon))\stepcounter{equation}\tag{\theequation} \\[1ex]
&\hspace{50pt} + n\, F_x(t_{i-1},A_{t_{i-1}} , 2 \epsilon + \tilde{b}^m_{t_{i-1}})  \, \rho(n(Y_{s-}-2 \epsilon))}{[Y,Y]^c_s}   \\[1ex]
&+ \spint{-\infty}{\infty}{\Bigg\{\spint{t_{i-1}}{t_i}{F_s(s-,A_{s-},Y_{t_i} - z / n + \tilde{b}^m_{s-}) \indic{Y_{t_i} - \frac{z}{n} > \, 2 \epsilon}}{s} \\[1ex]
 &\hspace{32pt}+ \spint{t_{i-1}}{t_i}{F_a(s-,A_{s-},Y_{t_i} - z / n + \tilde{b}^m_{s-}) \indic{Y_{t_i} - \frac{z}{n} > \, 2 \epsilon}}{A^c_s} \\[1ex]
&\hspace{32pt}+ \spint{t_{i-1}}{t_i}{F_x(s-,A_{s-},Y_{t_i} - z / n + \tilde{b}^m_{s-}) \indic{Y_{t_i} - \frac{z}{n} >\,  2 \epsilon}}{\tilde{b}^{m,c}_s} \\[1ex]
& \hspace{32pt}+ \frac{1}{2} \spint{t_{i -1}}{t_i}{F_{xx}(t_{i-1},A_{t_{i-1}},Y_{s-}\! - z / n + \tilde{b}^m_{t_{i-1}}))\indic{Y_{s-} - \frac{z}{n} > \, 2 \epsilon}}{[Y,Y]^c_s}   \\[0.4ex]
&\hspace{32pt} +\spint{t_{i-1}}{t_i}{F_{x}(t_{i-1},A_{t_{i-1}},Y_{s-}\! - z / n + \tilde{b}^m_{t_{i-1}})\indic{Y_{s-} - \frac{z}{n} > \, 2 \epsilon} }{Y_s} \,  \Bigg\}  \;\, \rho(z)}{z}  \\[1ex]
& \hspace{0pt} + \;\;\spsum{t_{i-1} < s  \leq \, t_i} \; G^{n,m,\epsilon}(t_{i-1},A_{t_{i-1}},Y_{s}) - G^{n,m,\epsilon}(t_{i-1},A_{t_{i-1}},Y_{s-})  - G^{n,m,\epsilon}_{x} (t_{i-1},A_{t_{i-1}},Y_{s-}) \Delta Y_s \\[1ex]
&  \hspace{0pt}+ \;\;\spsum{t_{i-1} < s  \leq \, t_i} \;  G^{n,m,\epsilon}(s,A_{s},Y_{t_i}) - G^{n,m,\epsilon}(s-,A_{s-},Y_{t_i} ) .
\end{align*}

By summing over the partition, we obtain $G^{n,m,\epsilon}(t,A_t,Y_{t}) -  G^{n,m,\epsilon}(0,A_0,Y_0)$. Then re-changing the order of integration, we may use continuity of the convolution approximation to allow the mesh size of the partition to tend to zero. The full expression is
\begin{align*}
 & \hspace{-16pt} G^{n,m,\epsilon}(t,A_{t},Y_{t}) - G^{n,m,\epsilon}(0,A_{0},Y_{0}) = \\[1ex]
\hspace{10pt} & \spint{0}{t}{n \,F(s-,A_{s-} , 2 \epsilon + \tilde{b}^m_{s-})  \, \rho(n(Y_{s-}-2 \epsilon))}{Y_s}  \\[1ex]
&+ \frac{1}{2} \spint{0}{t}{n^2 \,F(s-,A_{s-} , 2 \epsilon + \tilde{b}^m_{s-})  \, \rho'(n(Y_{s-}-2 \epsilon)) \\[1ex]
&\hspace{50pt} + n \,F_x(s-,A_{s-} , 2 \epsilon + \tilde{b}^m_{s-})  \, \rho(n(Y_{s-}-2 \epsilon))}{[Y,Y]^c_s}   \\[1ex]
&+ \spint{-\infty}{\infty}{\Bigg\{\spint{0}{t}{F_s(s-,A_{s-},Y_{s} - z / n + \tilde{b}^m_{s-}) \indic{Y_{s} - \frac{z}{n} > \, 2 \epsilon}}{s} \\[0.4ex]
 &\hspace{32pt}+ \spint{0}{t}{F_a(s-,A_{s-},Y_{s} - z / n + \tilde{b}^m_{s-}) \indic{Y_{s} - \frac{z}{n} > \, 2 \epsilon}}{A^c_s} \stepcounter{equation}\tag{\theequation} \\[1ex]
&\hspace{32pt}+ \spint{0}{t}{F_x(s-,A_{s-},Y_{s} - z / n + \tilde{b}^m_{s-}) \indic{Y_{s} - \frac{z}{n} >\,  2 \epsilon}}{\tilde{b}^{m,c}_s} \\[1ex]
& \hspace{32pt}+ \frac{1}{2} \spint{0}{t}{F_{xx}(s-,A_{s-},Y_{s-}\! - z / n + \tilde{b}^m_{s-}))\indic{Y_{s-}\, - \frac{z}{n} > \, 2 \epsilon}}{[Y,Y]^c_s}   \\[0.4ex]
&\hspace{32pt} +\spint{0}{t}{F_{x}(s-,A_{s-},Y_{s-}\! - z / n + \tilde{b}^m_{s-})\indic{Y_{s-} \,- \frac{z}{n} > \, 2 \epsilon} }{Y_s} \,  \Bigg\}  \;\, \rho(z)}{z}  \\[1ex]
& \hspace{0pt} + \;\;\spsum{0\, < s  \leq \, t} \;\; G^{n,m,\epsilon}(s-,A_{s-},Y_{s}) - G^{n,m,\epsilon}(s-,A_{s-},Y_{s-})  - G^{n,m,\epsilon}_{x} (s-,A_{s-},Y_{s-}) \Delta Y_s \\[0.7ex]
&  \hspace{0pt}+ \;\;\spsum{0 \,< s  \leq \, t} \;\;  G^{n,m,\epsilon}(s,A_{s},Y_{s}) - G^{n,m,\epsilon}(s-,A_{s-},Y_{s} ) .
\end{align*}

We now substitute $Y = X - \tilde{b}^m$. Then, we may replace terms $X_{s}$ by their respective left-limits $X_{s-}$, as the jump set of $X$ is at most countable, and the continuous measures such as $dA^c$ assign zero measure to countable sets. We also change the order of integration freely. Combining the above with the $H$ approximations, we obtain
\begin{align*} \label{bigproofcts}
& \hspace{-16pt} (G^{n,m,\epsilon} + H^{n,m,\epsilon})(t,A_{t},X_{t} - \tilde{b}^m_{t}) - (G^{n,m,\epsilon} +  H^{n,m,\epsilon})(0,A_{0},X_{0} - \tilde{b}^m_{0}) = \\[1ex]
\hspace{10pt}& \spint{0}{t}{n \left[ F(s-,A_{s-} , \zeta + \tilde{b}^m_{s-})  \, \rho(n(X_{s-} - \tilde{b}^m_{s-}-\zeta))\right]_{\zeta = -\epsilon}^{2 \epsilon} }{X_s} \\[1ex]
&+ \spint{0}{t}{n \left[ F(s-,A_{s-} , \zeta + \tilde{b}^m_{s-})  \, \rho(n(X_{s-} - \tilde{b}^m_{s-}-\zeta))\right]_{\zeta = -\epsilon}^{2 \epsilon} }{\tilde{b}^m_s} \\[1ex]
&+ \frac{1}{2} \spint{0}{t}{\,\left[ n^2 F(s-,A_{s-} ,\zeta + \tilde{b}^m_{s-})  \, \rho'(n(X_{s-} - \tilde{b}^m_{s-}-\zeta)) \right]_{\zeta = -\epsilon}^{2 \epsilon} \\[1ex]
\stepcounter{equation}\tag{\theequation} &\hspace{50pt} + \left[ n \, F_x(s-,A_{s-} , \zeta + \tilde{b}^m_{s-})  \, \rho(n(X_{s-} - \tilde{b}^m_{s-}-\zeta))\right]_{\zeta = -\epsilon}^{2 \epsilon}}{[X,X]^c_s}   \\[0.4ex]
&+ \spint{-\infty}{\infty}{\Bigg\{\spint{0}{t}{F_s(s-,A_{s-},X_{s-} - z / n) \indic{X_{s-} - \, \tilde{b}^m_{s-} - \frac{z}{n} \notin (- \epsilon,2 \epsilon]\,}}{s} \\[0.4ex]
&\hspace{32pt}+ \spint{0}{t}{F_a(s-,A_{s-},X_{s-} - z / n) \indic{X_{s-} - \, \tilde{b}^m_{s-} - \frac{z}{n} \notin (- \epsilon,2 \epsilon]\,}}{A^c_s} \\[1ex]
&\hspace{32pt} + \frac{1}{2} \spint{0}{t}{F_{xx}(s-,A_{s-},X_{s-} - z / n) \indic{X_{s-} - \, \tilde{b}^m_{s-} - \frac{z}{n} \notin (- \epsilon,2 \epsilon]\,}}{[X,X]^c_s}   \\[1ex]
& \hspace{32pt}+\spint{0}{t}{F_x(s-,A_{s-},X_{s-} - z / n) \indic{X_{s-} - \, \tilde{b}^m_{s-} - \frac{z}{n} \notin (- \epsilon,2 \epsilon]\,} }{X_s} \, \Bigg\} \;\,  \rho(z)}{z}  \\[1ex]
&+  \hspace{10pt} \spsum{0 < s  \leq t} \;\,\; \spint{-\infty}{\infty}{ \Big\{ F(s,A_{s},X_{s} - z/n) - F(s-,A_{s-},X_{s-} - z/n)  \\[1ex]
& \hspace{90pt} - F_{x} (s-,A_{s-},X_{s-} - z/n) \, \Delta X_s \,\Big\} \, \indic{X_{s-} - \, \tilde{b}^m_{s-} - \frac{z}{n} \notin (- \epsilon,2 \epsilon]\,}  \, \rho(z)}{z} \\[1ex]
&\hspace{55pt} + n \left[ F(s-,A_{s-} , \zeta + \tilde{b}^m_{s-})  \, \rho(n(X_{s-} - \tilde{b}^m_{s-}-\zeta))\right]_{\zeta = -\epsilon}^{2 \epsilon} \, \Delta X_s.
\end{align*}
The square parentheses above represent the difference of the expression they contain, evaluated at the subscript and superscript limits.

Now we may pass to the limit as $\epsilon \to 0$. Decomposing $X = K + M$, we employ the assumption (\ref{assumption3}). Letting $\epsilon \to 0$, we may then use  (\ref{assumption2}) and continuity of $ F$. We are left with
\begin{align*}
& (G^{n,m} + H^{n,m})(t,A_{t},X_{t} - \tilde{b}^m_{t}) - (G^{n,m} +  H^{n,m})(0,A_{0},X_{0} - \tilde{b}^m_{0}) = \\[1ex]
& \spint{0}{t}{ \, \Bigg\{ \spint{-\infty}{\infty}{ H(s-,A_{s-},X_{s-}-z/n) \indic{X_{s-}- \frac{z}{n} \neq \, \tilde{b}^m_{s-}} \; \rho(z)}{z} \Bigg\} \; }{\lambda(s)}    \\[1ex]
\stepcounter{equation}\tag{\theequation} &+ \spint{0}{t}{ \, \Bigg\{ \spint{-\infty}{\infty}{ F_x(s-,A_{s-},X_{s-}-z/n) \indic{X_{s-}- \frac{z}{n} \neq \,\tilde{b}^m_{s-}} \; \rho(z)}{z} \Bigg\}  }{M_s} \\[1ex]
&+ \; \spsum{0 <\, s \, \leq t} \;\,\; \spint{-\infty}{\infty}{ \, \Bigg\{ F(s,A_{s},X_{s} - z/n) - F(s-,A_{s-},X_{s-} - z/n)  \\[1ex]
& \hspace{70pt} - F_{x} (s-,A_{s-},X_{s-} - z/n) \, \Delta M_s \,\Bigg\} \, \indic{X_{s-}- \frac{z}{n} \neq \,\tilde{b}^m_{s-}}\; \rho(z)}{z}.
\end{align*}

The indicator function may be removed as $ \{n(X_{s-} - b_{s-}^m )\} $ has zero Lebesgue measure in the $z$ variable. Using (\ref{finallimit}) and the dominated convergence theorem, we may now take the limit as $m \to \infty$, then let $n \to \infty $, to give the result.
\end{proof}

\begin{proof}[Proof of Theorem \ref{firsttheorem}]
The proof of the local time on curves formula for jump processes, Theorem \ref{firsttheorem}, follows in the same way as the proof of the extended It\^o formula, Theorem \ref{extendeditoform}, with the following considerations. Recall that the approximation $\tilde{b}^m$ is simply $b$ in this setting.

Note that the absence of condition (\ref{assumption2}) means that the first-derivative boundary term associated to $\mathrm{d}[X,X]^c_s$ in (\ref{bigproofcts}),
\begin{equation}
\frac{1}{2} \spint{0}{t}{ \, \left[ n \, F_x(s-,A_{s-} , \zeta + \tilde{b}^m_{s-})  \, \rho(n(X_{s-} - \tilde{b}^m_{s-}-\zeta))\right]_{\zeta = -\epsilon}^{2 \epsilon}}{[X,X]^c_s}\, ,
\end{equation}
does not vanish as $\epsilon \to 0$. To introduce the local time, we prove weak convergence of the measures $d_t J^n_t$, given below, to the measure $ d_t \lt{0}{t}(X-b) $ associated to the one-sided local time of $X - b$ at zero, using the occupation time formula. Define $J^n$ by
\begin{equation}
\begin{split}
J^n_t = &\spint{0}{t}{ n\, \rho(n(X_{s-}\! - b_{s-})) }{ [X - b, X - b]^c_s}.
\end{split}
\end{equation}
Note that the measure $ \mathrm{d} [X - b, X - b]^c_s$ assigns zero measure to countable sets, and as $X$ and $b$ are c\'adl\'ag processes, they have at most countably many jumps. Thus we may replace $X_{s-} \! - b_{s-}$ by $X_s - b_s$ above. 

We then have, by the occupation time formula from \cite{protter04} (Corollary 1, pg. 219), that
\begin{equation}
J^n_t = \spint{-\infty}{\infty}{n\, \rho(na)\, \lt{a}{t} }{ a} = \spint{-\infty}{\infty}{\lt{a/n}{t} \, \rho(a) }{a}.
\end{equation}
Under (\ref{jumpass1}), or more generally assuming that the conditions of Remark \ref{jumpremark} hold, by \cite{protter04} (Theorem 76, pg. 228), we may take a version of $\lt{a}{t}$ which is jointly right-continuous in $a$ and continuous in $t$, at the level $0$ in space.  It then follows that, as $n \rightarrow \infty$, we have $J^n_t \rightarrow \lt{0}{t}$ for each $t \geq 0$. This implies weak convergence of the measures $d_t J^n_t$ to $d_t \lt{0}{t}$. In fact, $d_t J^n_t ([a,b))$ converges to $d_t \lt{0}{t} ( [a,b) )$ for all such half-open intervals.

For fixed $\omega \in \Omega$, define the function 
\begin{equation}
s \mapsto g(s) = F_x(s,A_{s},b_{s}+) -  F_x(s,A_{s},b_{s}-). 
\end{equation}

We know that $g$ is right-continuous with left limits (for almost-all such $\omega$) by (\ref{joinctsassumption}). By the standard theory of regulated functions, $g$ admits a uniform approximation by right-continuous step functions. That is, for fixed $\varepsilon > 0$ there exists $h$ such that $\| h-g \|_\infty < \varepsilon$ on $[0,t]$, with
\begin{equation}
h(s) = \sum_{i=0}^N h_i \,\mathbbm{1}_{[a_i,a_{i+1})} (s) + h_{N+1} \mathbbm{1}_{t}(s),
\end{equation}
where $N \in \mathbb{N}$, the $h_i$ are real-valued constants, and $ 0 = a_1 < a_2 < \dots < a_N = t$. We employ again the time-dependent occupation time formula, yielding
\begin{equation}
\begin{split}
&\spint{0}{t}{\ g(s) \, n \, \rho(n(X_{s-}\! -b_{s-})) }{[X-b,X-b]^c_s} = \\[1ex]  &\spint{-\infty}{\infty}{ \left( \spint{0}{t}{ g(s) }{_s \lt{a}{s}} \right) n \, \rho(na)}{a} 
\; = \spint{-\infty}{\infty}{ \left( \spint{0}{t}{ g(s)  }{_s \lt{c/n}{s} } \right) \rho(c)}{c}.
\end{split}
\end{equation}
This gives us the estimate
\begin{equation}
\begin{split}
\left| \spint{0}{t}{ g(s)  }{_s \lt{c/n}{s} } - \spint{0}{t}{  g(s)  }{_s \lt{\, 0}{s} } \, \right| &\leq \left| \spint{0}{t}{ \left( g(s) - h(s) \right) }{_s \lt{c/n}{s} } - \spint{0}{t}{ \left( g(s) - h(s) \right) }{_s \lt{\, 0}{s} } \, \right| \\[1ex] &+ \left| \spint{0}{t}{ h(s)  }{_s \lt{c/n}{s} } - \spint{0}{t}{  h(s)  }{_s \lt{\, 0}{s} } \, \right| \! .
\end{split}
\end{equation}

For any $\delta>0$, we can take $M$ large enough such that the final term is bounded above by $\delta$, for all $0 < c/n < 1/M$. This follows from the convergence of $d_s \lt{c/n}{s}$ to $d_s \lt{\, 0}{s}$ for each indicator function in $h$. The first term is bounded by 
\begin{equation}
2 \varepsilon \sup_{0\,<\,c\,<\,1/M}  \lt{c}{t} .
\end{equation}
Each of these terms can be made arbitrarily small. So we have shown 
\begin{equation}
\begin{split}
 \lim_{n \to \infty} \; \frac{1}{2} \, &\spint{0}{t}{\left( F_x(s-,A_{s-},b_{s-} +) - F_x(s-,A_{s-},b_{s-}-) \right) \, n \, \rho(n(X_{s-}\!\! -b_{s-})) }{[X-b,X-b]^c_s} \\[0.8ex]
= \frac{1}{2} &\spint{0}{t}{\left( F_x(s-,A_{s-},b_{s-} +) - F_x(s-,A_{s-},b_{s-}-) \right) }{_s \lt{ 0}{s}(X-b)},
\end{split}
\end{equation}
where convergence holds almost surely.
Finally, when letting $n \to \infty$, we must employ (\ref{jumpass1}), or the corresponding condition of Remark \ref{jumpremark}, to ensure that the limiting jump-terms form an absolutely convergent sum. This completes the proof of Theorem \ref{firsttheorem}.
\end{proof}

The following remark establishes the `local time on surfaces' formula under strong smoothness conditions when the process obeys (\ref{jumpass1}).
\begin{remark}
Observe that the conditions on the function $F$ in the `local time on surfaces' formula under strong smoothness conditions, equation (\ref{strongsmoothnessltc}), immediately give us that the function $(t,a) \mapsto F_x(t,a,b(t,a)+) - F_x(t,a,b(t,a)-)$ is jointly continuous, and $(b_t)_{t\geq0}$ is a semimartingale by assumption.

Note that strong smoothness immediately gives local boundedness of $F_t \,,F_a$ and $F_{xx}$, and ensures that their left limits exist everywhere. Combining Remarks \ref{firstremark} and \ref{lastremark}, we may immediately see that the left-hand side of (\ref{assumption}) is already of the required form. We assume (\ref{jumpass1}) holds, giving the result.
\end{remark}

\begin{remark}
Consider equation (\ref{ito-diff-form}). In this case, there is no bounded variation process $A$. Note that $t \mapsto F_x(t,b(t) \pm \epsilon)$, are continuous functions in $t$ for each fixed $\epsilon>0$, and converge uniformly. This ensures that the map $t \mapsto F_x(t,b(t)+) - F_x(t,b(t)-)$ is continuous.

Take $ \lambda $ to be the Lebesgue measure. Note that by the occupation times formula the set $\{s \in [0,t] \, | \, X_s = b(s) \}$ is $\lambda$-null, almost surely. Outside this set, the infinitesimal generator appearing in (\ref{con3}) is continuous, and so has left limits. Employing Remark \ref{limitremark}, and taking $H$ to be the expression in (\ref{con3}), the theorem follows.
\end{remark}

We note next an important consideration regarding limits in the integral terms, which is presented in \cite{peskir05} (Remark 2.4, pg. 15). 
\begin{remark} \label{removelimsrmk}
If 
\begin{equation} \label{probzero}
\mathbb{P}(X_{s-} = b_{s-}) = 0,
\end{equation}
for all $ 0 < s \leq t$, then we find that
\begin{equation}
\spint{0}{t}{\indic{X_{s-} =\, b_{s-}}}{s} = 0
\end{equation}
almost surely, and so we may introduce this indicator function into the time integral, eliminating the limits in the space variable. In fact, if $X$ solves an SDE such as (\ref{xitodiff}) then (\ref{probzero}) is satisfied, and furthermore
\begin{equation}
\spint{0}{t}{\indic{X_{s-} = \, b_{s-}}}{X_s} = 0.
\end{equation}
This may be shown using the extended occupation times formula and Fubini theorem. Hence we may eliminate limits in the stochastic integral in the same way. Similar considerations apply to other integral terms.
\end{remark}

\bibliographystyle{napalike}
\bibliography{bib}{}


\par \leftskip=24pt

\ni Daniel Wilson \\
School of Mathematics \\
The University of Manchester \\
Oxford Road \\
Manchester M13 9PL \\
United Kingdom \\
\texttt{daniel.wilson-2@manchester.ac.uk}

\par

\end{document}